\documentclass{amsart}
\usepackage{amssymb}
\usepackage{amsmath}
\usepackage{amsfonts}

\setcounter{MaxMatrixCols}{10}

\newtheorem{theorem}{Theorem}
\theoremstyle{plain}

\newtheorem{lemma}{Lemma}

\numberwithin{equation}{section}
\input{tcilatex}

\begin{document}
\title[ON\ THE SIMPSON'S\ INEQUALITY]{ON\ THE SIMPSON'S\ INEQUALITY FOR\
CO-ORDINATED\ CONVEX\ FUNCTIONS}
\author{M. Emin \"{O}zdemir$^{\blacklozenge }$}
\address{$^{\blacklozenge }$Ataturk University, K.K. Education Faculty,
Department of Mathematics, 25240, Erzurum, Turkey}
\email{emos@atauni.edu.tr}
\author{Ahmet Ocak Akdemir$^{\spadesuit ,\bigstar }$}
\address{$^{\spadesuit }$A\u{g}r\i\ \.{I}brahim \c{C}e\c{c}en University,
Faculty of Science and Arts, Department of Mathematics, 04100, A\u{g}r\i ,
Turkey}
\email{ahmetakdemir@agri.edu.tr}
\author{Havva Kavurmac\i $^{\blacklozenge }$}
\address{$^{\blacklozenge }$Ataturk University, K.K. Education Faculty,
Department of Mathematics, 25240, Erzurum, Turkey}
\email{hkavurmaci@atauni.edu.tr}
\author{Merve Avc\i $^{\blacklozenge }$}
\email{merveavci@ymail.com}
\thanks{$^{\bigstar }$Corresponding Author}
\date{December 29, 2010}
\subjclass[2000]{ 26D10,26D15}
\keywords{Simpson's inequality, co-ordinates, convex functions }

\begin{abstract}
In this paper, a new lemma is proved and inequalities of Simpson type are
established for co-ordinated convex functions and bounded functions.
\end{abstract}

\maketitle

\section{INTRODUCTION}

The following inequality is well-known in the literature as Simpson's
inequality:

\begin{theorem}
Let $f:\left[ a,b\right] \rightarrow 
\mathbb{R}
$ be a four times continuously differentiable mapping on $\left[ a,b\right] $
and $\left\Vert f^{\left( 4\right) }\right\Vert _{\infty }=\sup\limits_{x\in %
\left[ a,b\right] }\left\vert f^{\left( 4\right) }\left( x\right)
\right\vert <\infty .$ Then the folllowing inequality holds:%
\begin{equation*}
\left\vert \frac{1}{3}\left[ \frac{f\left( a\right) +f\left( b\right) }{2}%
+2f\left( \frac{a+b}{2}\right) \right] -\frac{1}{b-a}\int_{a}^{b}f\left(
x\right) dx\right\vert \leq \frac{1}{2880}\left\Vert f^{\left( 4\right)
}\right\Vert _{\infty }\left( b-a\right) ^{4}.
\end{equation*}
\end{theorem}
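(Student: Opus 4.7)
The approach I would take is via the Peano kernel representation of the quadrature error. Writing $c = \frac{a+b}{2}$ and
\[
E(f) := \frac{1}{3}\left[\frac{f(a)+f(b)}{2} + 2f(c)\right] - \frac{1}{b-a}\int_a^b f(x)\,dx,
\]
the functional $E$ is linear and, because Simpson's rule integrates cubics exactly, annihilates every polynomial of degree at most three. The Peano kernel theorem then furnishes a piecewise polynomial kernel $K$ such that
\[
E(f) = \int_a^b K(x)\, f^{(4)}(x)\,dx.
\]
Once this identity is in hand, the theorem follows from $|E(f)| \leq \|f^{(4)}\|_\infty \int_a^b |K(x)|\,dx$, reducing everything to a single $L^1$-norm computation.

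Concretely I would proceed in three steps. First, normalize to the canonical interval $[-1,1]$ via $x = c + \frac{b-a}{2}t$; the change of variables scales $dx$ and $f^{(4)}$ so that all $(b-a)$-factors collect into a single prefactor at the end. Second, on each of $[-1,0]$ and $[0,1]$, choose a degree-four polynomial kernel vanishing to sufficient order at the endpoints $\pm 1$ and at the midpoint $0$; four successive integrations by parts then convert $\int K f^{(4)}$ into the boundary contributions $\tfrac{1}{6}f(\pm 1) + \tfrac{2}{3}f(0)$ together with $-\tfrac{1}{2}\int_{-1}^1 f$. Third, evaluate $\int_{-1}^1 |K|$ in closed form and transport the result back to $[a,b]$ to obtain the factor $\frac{(b-a)^4}{2880}$.

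The main obstacle is engineering the piecewise kernel so that every intermediate boundary term produced by the integrations by parts vanishes, leaving exactly the Simpson weights $\tfrac{1}{6}, \tfrac{2}{3}, \tfrac{1}{6}$ and no spurious contributions at the interior node. This forces $K$ to be divisible by $(t+1)^2$ on the left half and $(t-1)^2$ on the right half, together with matching conditions at $t=0$. A cleaner alternative, which I might prefer in practice, is to apply Taylor's theorem with integral remainder to $f$ expanded about $c$, substitute the resulting expansions into $f(a)$, $f(b)$, and $\int_a^b f$, and then collect the fourth-order remainders directly; the constant $\tfrac{1}{2880}$ then emerges from explicit integrals of the form $\int_0^{(b-a)/2} s^3 \,(\text{weight})\,ds$.
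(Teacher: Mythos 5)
The paper does not prove this theorem at all: it is the classical Simpson inequality, quoted verbatim in the introduction as background, so there is no proof of record to compare yours against. Judged on its own, your Peano-kernel strategy is a correct and standard route. The functional $E$ annihilates cubics, the kernel on the reference interval $[-1,1]$ is, on the right half, $K(t)=\tfrac{1}{144}(1-t)^{3}(1+3t)$ (and its mirror image on the left half), and $\int_{-1}^{1}|K|=\tfrac{1}{180}$, which after the scaling $x=c+\tfrac{b-a}{2}t$ gives exactly $\tfrac{(b-a)^{4}}{2880}$.

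Two points in your sketch need tightening. First, for the four integrations by parts to leave only the weights $\tfrac16,\tfrac23,\tfrac16$ and the term $-\tfrac12\int_{-1}^{1}f$, the kernel must vanish to order \emph{three} at $t=\pm1$ (so that $K$, $K'$ and $K''$ all vanish there), not merely be divisible by $(t\mp1)^{2}$ as you state; the explicit factor $(1-t)^{3}$ above reflects this. Second, and more substantively, the sharp constant depends on the kernel having constant sign, so that $\int_{-1}^{1}|K|=\bigl|\int_{-1}^{1}K\bigr|=|E(t^{4})|/4!=\tfrac{1}{180}$; you should verify this sign property (it is immediate from the factored form). Your proposed ``cleaner alternative'' --- expanding $f(a)$, $f(b)$, and $\int_{a}^{b}f$ by Taylor's theorem about $c$ and bounding the fourth-order remainders term by term --- does \emph{not} recover $\tfrac{1}{2880}$: a straightforward triangle inequality on the separate remainders yields only $\tfrac{(b-a)^{4}}{720}$, because it discards the cancellation that the single signed kernel encodes. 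To get the stated constant by that route you must first combine the integral remainders into one kernel, at which point you are back to the Peano argument.
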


For recent results on Simpson's type inequalities see the papers \cite{SOZ1}%
, \cite{SOZ2}, \cite{SOZ3}, \cite{LIU}, \cite{SS2}, \cite{AL1}, \cite{UJ}
and \cite{LU}.

In \cite{SS}, Dragomir defined co-ordinated convexity and proved the
following inequalities:

Let us consider the bidimensional interval $\Delta =\left[ a,b\right] \times %
\left[ c,d\right] $ in $%
\mathbb{R}
^{2}$ with $a<b$ and $c<d.$ A function $f:\Delta \rightarrow 
\mathbb{R}
$ will be called convex on the co-ordinates if the partial mappings%
\begin{equation*}
f_{y}:\left[ a,b\right] \rightarrow 
\mathbb{R}
,\text{ \ \ }f_{y}\left( u\right) =f\left( u,y\right)
\end{equation*}%
and%
\begin{equation*}
f_{x}:\left[ c,d\right] \rightarrow 
\mathbb{R}
,\text{ \ \ }f_{x}\left( v\right) =f\left( x,v\right)
\end{equation*}%
are convex where defined for all $y\in \left[ c,d\right] $ and $x\in \left[
a,b\right] .$

Recall that the mapping $f:\Delta \rightarrow 
\mathbb{R}
$ is convex on $\Delta $, if the following inequality;%
\begin{equation*}
f\left( \lambda x+\left( 1-\lambda \right) z,\lambda y+\left( 1-\lambda
\right) w\right) \leq \lambda f\left( x,y\right) +\left( 1-\lambda \right)
f\left( z,w\right)
\end{equation*}%
holds for all $\left( x,y\right) ,$ $\left( z,w\right) \in \Delta $ and $%
\lambda \in \left[ 0,1\right] .$

\begin{theorem}
Suppose that $f:\Delta =\left[ a,b\right] \times \left[ c,d\right]
\rightarrow 
\mathbb{R}
$ is convex on the co-ordinates on $\Delta .$ Then one has the inequalities;%
\begin{eqnarray}
&&f\left( \frac{a+b}{2},\frac{c+d}{2}\right)  \notag \\
&\leq &\frac{1}{\left( b-a\right) \left( d-c\right) }\int_{a}^{b}%
\int_{c}^{d}f\left( x,y\right) dydx  \label{1.1} \\
&\leq &\frac{f\left( a,c\right) +f\left( b,c\right) +f\left( a,d\right)
+f\left( b,d\right) }{4}  \notag
\end{eqnarray}%
The above inequalities are sharp.
\end{theorem}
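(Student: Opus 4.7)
The plan is to reduce everything to two applications of the classical one-variable Hermite--Hadamard inequality
$$g\!\left(\tfrac{\alpha+\beta}{2}\right)\leq \frac{1}{\beta-\alpha}\int_{\alpha}^{\beta}g(t)\,dt\leq \frac{g(\alpha)+g(\beta)}{2},$$
used separately in each coordinate. The co-ordinated convexity hypothesis is tailor-made for this: it guarantees exactly that the partial maps $f_y$ and $f_x$ are convex in a single variable, so the one-dimensional H--H inequality applies to them.

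For the left inequality, I would first apply H--H to the partial mapping $g(x):=f\!\left(x,\tfrac{c+d}{2}\right)$, which is convex on $[a,b]$, to obtain
$$f\!\left(\tfrac{a+b}{2},\tfrac{c+d}{2}\right)\leq \frac{1}{b-a}\int_{a}^{b}f\!\left(x,\tfrac{c+d}{2}\right)dx.$$
Then for each fixed $x\in[a,b]$ the partial mapping $f_{x}(\cdot)$ is convex on $[c,d]$, so by the left H--H inequality $f\!\left(x,\tfrac{c+d}{2}\right)\leq \tfrac{1}{d-c}\int_{c}^{d}f(x,y)\,dy$. Integrating this in $x$ and dividing by $b-a$ yields the middle term of \eqref{1.1} and gives the first inequality.

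For the right inequality, I would reverse the order but use the right half of H--H at each stage. First, for every fixed $x$ apply H--H to $f_{x}$ on $[c,d]$ to get $\tfrac{1}{d-c}\int_{c}^{d}f(x,y)\,dy\leq \tfrac{f(x,c)+f(x,d)}{2}$; integrate over $[a,b]$ and divide by $b-a$ to reduce the problem to bounding the one-dimensional averages $\tfrac{1}{b-a}\int_a^b f(x,c)\,dx$ and $\tfrac{1}{b-a}\int_a^b f(x,d)\,dx$. Each of these is then bounded by the right H--H inequality applied to the convex partial maps $f_c(\cdot)$ and $f_d(\cdot)$ on $[a,b]$, producing $\tfrac{f(a,c)+f(b,c)}{2}$ and $\tfrac{f(a,d)+f(b,d)}{2}$ respectively. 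Averaging gives the claimed upper bound.

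There is no real obstacle here: the only point that needs care is that co-ordinated convexity is strictly weaker than joint convexity, so one must scrupulously invoke one-dimensional convexity at each step rather than trying a single two-dimensional argument. Sharpness follows immediately by checking equality for $f(x,y)=1$ (or any affine function), which makes all three quantities in \eqref{1.1} coincide.
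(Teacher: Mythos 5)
Your proof is correct. The paper does not actually prove this theorem---it is quoted without proof from Dragomir \cite{SS}---but your argument (apply the one-dimensional Hermite--Hadamard inequality to the convex partial mappings $f_y$ and $f_x$ in each variable, integrate the pointwise estimates, and check sharpness on an affine function) is precisely the standard proof of this result, and every step is sound.
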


Recently, several papers have been written on co-ordinated convexity. In 
\cite{HTS}, Hwang et al. gave a refinement of Hadamard's inequality on the
co-ordinates and they proved some inequalities for co-ordinated convex
functions. In \cite{DAR1}, \cite{DAR2} and \cite{DAR3}, Alomari and Darus
proved inequalities for co-ordinated $s-$convex functions. In \cite{AH},
Akdemir and \"{O}zdemir gave definition of co-ordinated $P-$convex functions
and Godunova-Levin functions and proved some inequalities. In \cite{AL},
Latif and Alomari defined co-ordinated $h-$convex functions and established
some inequalities for co-ordinated $h-$convex functions. In \cite{LAT},
Latif and Alomari proved inequalities involving product of convex functions
on the co-ordinates. In \cite{MSET}, \"{O}zdemir et al. defined co-ordinated 
$m-$convexity and $\left( \alpha ,m\right) -$convexity and gave
inequalities. In \cite{SET3}, Sar\i kaya et al. proved a new lemma and
established some inequalities on co-ordinated convex functions. In \cite{BAK}%
, Bakula and Pecaric gave several Jensen-type inequalities for co-ordinated
convex functions.

In this paper, we give a Simpson-type inequality for co-ordinated convex
functions on the basis of the following lemma.

\section{MAIN\ RESULTS}

To prove our main result, we need the following lemma.

\begin{lemma}
Let $f:\Delta \subset 
\mathbb{R}
^{2}\rightarrow 
\mathbb{R}
$ be a partial differentiable mapping on $\Delta =\left[ a,b\right] \times %
\left[ c,d\right] .$ If $\frac{\partial ^{2}f}{\partial t\partial s}\in
L\left( \Delta \right) ,$ then the following equality holds:%
\begin{eqnarray}
&&\frac{f\left( a,\frac{c+d}{2}\right) +f\left( b,\frac{c+d}{2}\right)
+4f\left( \frac{a+b}{2},\frac{c+d}{2}\right) +f\left( \frac{a+b}{2},c\right)
+f\left( \frac{a+b}{2},d\right) }{9}  \notag \\
&&+\frac{f\left( a,c\right) +f\left( b,c\right) +f\left( a,d\right) +f\left(
b,d\right) }{36}  \label{2.1} \\
&&-\frac{1}{6\left( b-a\right) }\int_{a}^{b}\left[ f\left( x,c\right)
+4f\left( x,\frac{c+d}{2}\right) +f\left( x,d\right) \right] dx  \notag \\
&&-\frac{1}{6\left( d-c\right) }\int_{c}^{d}\left[ f\left( a,y\right)
+4f\left( \frac{a+b}{2},y\right) +f\left( b,y\right) \right] dy  \notag \\
&&+\frac{1}{\left( b-a\right) \left( d-c\right) }\int_{a}^{b}\int_{c}^{d}f%
\left( x,y\right) dydx  \notag \\
&=&\left( b-a\right) \left( d-c\right) \int_{0}^{1}\int_{0}^{1}p\left(
x,t\right) q\left( y,s\right) \frac{\partial ^{2}f}{\partial t\partial s}%
\left( ta+\left( 1-t\right) b,sc+\left( 1-s\right) d\right) dtds  \notag
\end{eqnarray}%
where%
\begin{equation*}
p\left( x,t\right) =\left\{ 
\begin{array}{c}
\left( t-\frac{1}{6}\right) ,\text{ \ \ \ \ }t\in \left[ 0,\frac{1}{2}\right]
\\ 
\\ 
\left( t-\frac{5}{6}\right) ,\text{ \ \ \ \ }t\in \left( \frac{1}{2},1\right]%
\end{array}%
\right.
\end{equation*}%
and%
\begin{equation*}
q\left( y,s\right) =\left\{ 
\begin{array}{c}
\left( s-\frac{1}{6}\right) ,\text{ \ \ \ \ }s\in \left[ 0,\frac{1}{2}\right]
\\ 
\\ 
\left( s-\frac{5}{6}\right) ,\text{ \ \ \ \ }s\in \left( \frac{1}{2},1\right]%
\end{array}%
\right. .
\end{equation*}
\end{lemma}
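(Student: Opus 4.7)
The identity is a two-dimensional analogue of the one-dimensional Simpson kernel lemma, so I would prove it by iterated integration by parts: first in $s$, then in $t$, exploiting the fact that $p(x,t)$ and $q(y,s)$ are piecewise linear with convenient jump values at $1/2$.

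Set $\phi(t,s):=f(ta+(1-t)b,\,sc+(1-s)d)$, so that the right-hand side of \eqref{2.1} is $(b-a)(d-c)\int_0^1\!\int_0^1 p(x,t)q(y,s)\,\phi_{ts}(t,s)\,dt\,ds$. By Fubini I work on the inner $s$-integral first, splitting the domain at $s=1/2$ in accordance with the definition of $q(y,s)$. On each subinterval $q$ is affine with derivative $1$, and the boundary values $q(y,0)=-1/6$, $q(y,1/2^{-})=1/3$, $q(y,1/2^{+})=-1/3$, $q(y,1)=1/6$ yield, after integration by parts,
\begin{equation*}
\int_0^1 q(y,s)\,\phi_{ts}(t,s)\,ds
= \tfrac{2}{3}\phi_t(t,\tfrac12)+\tfrac{1}{6}\phi_t(t,0)+\tfrac{1}{6}\phi_t(t,1)-\int_0^1 \phi_t(t,s)\,ds.
\end{equation*}

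Next I multiply by $p(x,t)$, integrate in $t$, and apply the very same pattern of integration by parts, again splitting at $t=1/2$. For any smooth $\psi$ the kernel $p$ obeys
\begin{equation*}
\int_0^1 p(x,t)\,\psi'(t)\,dt
= \tfrac{2}{3}\psi(\tfrac12)+\tfrac{1}{6}\psi(0)+\tfrac{1}{6}\psi(1)-\int_0^1 \psi(t)\,dt,
\end{equation*}
and applying this with $\psi(t)=\phi(t,\sigma)$ for each $\sigma\in\{0,\tfrac12,1\}$, together with $\psi(t)=\int_0^1\phi(t,s)\,ds$ for the remaining term, expands the expression into $9$ point values of the form $c_{ij}\,\phi(\tau_i,\sigma_j)$ with $\tau_i,\sigma_j\in\{0,\tfrac12,1\}$, six line integrals of $\phi$ along the edges of $[0,1]^2$, and one full area integral $\int_0^1\!\int_0^1\phi(t,s)\,dt\,ds$. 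The coefficients $c_{ij}$ are the tensor products $(\tfrac16,\tfrac46,\tfrac16)\otimes(\tfrac16,\tfrac46,\tfrac16)$ of the Simpson weights, producing precisely $\tfrac{1}{36}$ on corners, $\tfrac{1}{9}$ on edge midpoints, and $\tfrac{4}{9}$ at the center.

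Finally I undo the affine substitutions $x=ta+(1-t)b$ and $y=sc+(1-s)d$; the Jacobian $(b-a)(d-c)$ outside the RHS is exactly canceled by the factors $\tfrac{1}{b-a}$ and $\tfrac{1}{d-c}$ produced when the edge and area integrals in $\phi$ are rewritten in terms of $f$. Matching the nine corner values to the corresponding values of $f$ at the corners, edge midpoints, and center of $[a,b]\times[c,d]$, one recovers exactly the left-hand side of \eqref{2.1}.

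The conceptually new content is minimal: the proof is really a tensor product of two copies of the standard one-dimensional Simpson kernel identity. The principal obstacle is therefore purely organizational—careful bookkeeping of the nine boundary terms, the signs produced by the interior jump of the piecewise kernels at $1/2$, and the Jacobian factors from the two linear changes of variable.
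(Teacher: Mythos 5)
Your strategy---iterated integration by parts against the two piecewise-linear Simpson kernels, reducing the double integral to a tensor product of the one-dimensional identity $\int_0^1 p(x,t)\,\psi'(t)\,dt=\tfrac16\psi(0)+\tfrac23\psi(\tfrac12)+\tfrac16\psi(1)-\int_0^1\psi(t)\,dt$---is exactly the paper's proof (the paper merely integrates in $t$ first and then in $s$, which is immaterial). Your boundary values $q(y,0)=-\tfrac16$, $q(y,\tfrac12^{\mp})=\pm\tfrac13$, $q(y,1)=\tfrac16$, the resulting weights $(\tfrac16,\tfrac46,\tfrac16)\otimes(\tfrac16,\tfrac46,\tfrac16)$ on the nine nodes, and the identification of the six edge integrals and the area integral are all correct.

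The one step that does not survive scrutiny is the final Jacobian accounting. With $\phi(t,s)=f(ta+(1-t)b,\,sc+(1-s)d)$, the chain rule gives $\phi_{ts}(t,s)=(b-a)(d-c)\,\frac{\partial ^{2}f}{\partial t\partial s}\left( ta+\left( 1-t\right) b,sc+\left( 1-s\right) d\right)$, where the paper's $\frac{\partial ^{2}f}{\partial t\partial s}$ denotes the mixed partial of $f$ in its own arguments evaluated at that point (this is visible in the paper's proof, where each integration by parts produces a factor $\tfrac{1}{a-b}$). Hence the right-hand side of \eqref{2.1} equals $\int_0^1\int_0^1 p\,q\,\phi_{ts}\,dt\,ds$ with \emph{no} outer factor; your identification of it with $(b-a)(d-c)\int_0^1\int_0^1 p\,q\,\phi_{ts}\,dt\,ds$ conflates the mixed partial of $f$ with that of the composite $\phi$ and introduces a spurious factor $(b-a)(d-c)$. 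That factor cannot then be ``canceled by the $\tfrac1{b-a}$ and $\tfrac1{d-c}$ from rewriting the edge and area integrals,'' as you assert: those reciprocal factors are supposed to survive into the left-hand side (they appear there as $\tfrac1{6(b-a)}$, $\tfrac1{6(d-c)}$ and $\tfrac1{(b-a)(d-c)}$), and the nine point values carry no reciprocal factors at all that could absorb an extra $(b-a)(d-c)$. Once you drop the outer factor from your expression for the right-hand side (equivalently, absorb it via the chain rule as above), your expansion matches the left-hand side of \eqref{2.1} term by term and the argument is complete.
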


\begin{proof}
Integrating by parts, we can write%
\begin{eqnarray*}
&&\int_{0}^{1}\int_{0}^{1}p\left( x,t\right) q\left( y,s\right) \frac{%
\partial ^{2}f}{\partial t\partial s}\left( ta+\left( 1-t\right) b,sc+\left(
1-s\right) d\right) dtds \\
&=&\int_{0}^{1}q\left( y,s\right) \left[ \int_{0}^{\frac{1}{2}}\left( t-%
\frac{1}{6}\right) \frac{\partial ^{2}f}{\partial t\partial s}\left(
ta+\left( 1-t\right) b,sc+\left( 1-s\right) d\right) dt\right. \\
&&\left. +\int_{\frac{1}{2}}^{1}\left( t-\frac{5}{6}\right) \frac{\partial
^{2}f}{\partial t\partial s}\left( ta+\left( 1-t\right) b,sc+\left(
1-s\right) d\right) dt\right] ds
\end{eqnarray*}%
By integrating the right hand side of equality, we get%
\begin{eqnarray*}
&&\int_{0}^{1}q\left( y,s\right) \left\{ \left[ \left( t-\frac{1}{6}\right)
\left( \frac{1}{a-b}\right) \frac{\partial f}{\partial s}\left( ta+\left(
1-t\right) b,sc+\left( 1-s\right) d\right) \right] _{0}^{\frac{1}{2}}\right.
\\
&&-\frac{1}{a-b}\int_{0}^{\frac{1}{2}}\frac{\partial f}{\partial s}\left(
ta+\left( 1-t\right) b,sc+\left( 1-s\right) d\right) dt \\
&&+\left[ \left( t-\frac{5}{6}\right) \left( \frac{1}{a-b}\right) \frac{%
\partial f}{\partial s}\left( ta+\left( 1-t\right) b,sc+\left( 1-s\right)
d\right) \right] _{\frac{1}{2}}^{1} \\
&&\left. -\frac{1}{a-b}\int_{\frac{1}{2}}^{1}\frac{\partial f}{\partial s}%
\left( ta+\left( 1-t\right) b,sc+\left( 1-s\right) d\right) dt\right\} ds
\end{eqnarray*}%
\begin{eqnarray*}
&=&\frac{1}{b-a}\left\{ -\frac{1}{3}\int_{0}^{\frac{1}{2}}\left( s-\frac{1}{6%
}\right) \frac{\partial f}{\partial s}\left( \frac{a+b}{2},sc+\left(
1-s\right) d\right) ds\right. \\
&&-\frac{1}{3}\int_{\frac{1}{2}}^{1}\left( s-\frac{5}{6}\right) \frac{%
\partial f}{\partial s}\left( \frac{a+b}{2},sc+\left( 1-s\right) d\right) ds
\\
&&-\frac{1}{6}\int_{0}^{\frac{1}{2}}\left( s-\frac{1}{6}\right) \frac{%
\partial f}{\partial s}\left( b,sc+\left( 1-s\right) d\right) ds \\
&&-\frac{1}{6}\int_{\frac{1}{2}}^{1}\left( s-\frac{5}{6}\right) \frac{%
\partial f}{\partial s}\left( b,sc+\left( 1-s\right) d\right) ds \\
&&+\int_{0}^{\frac{1}{2}}\int_{0}^{\frac{1}{2}}\left( s-\frac{1}{6}\right) 
\frac{\partial f}{\partial s}\left( ta+\left( 1-t\right) b,sc+\left(
1-s\right) d\right) dsdt \\
&&+\int_{\frac{1}{2}}^{1}\int_{0}^{\frac{1}{2}}\left( s-\frac{5}{6}\right) 
\frac{\partial f}{\partial s}\left( ta+\left( 1-t\right) b,sc+\left(
1-s\right) d\right) dsdt \\
&&-\frac{1}{6}\int_{0}^{\frac{1}{2}}\left( s-\frac{1}{6}\right) \frac{%
\partial f}{\partial s}\left( a,sc+\left( 1-s\right) d\right) ds \\
&&-\frac{1}{6}\int_{\frac{1}{2}}^{1}\left( s-\frac{5}{6}\right) \frac{%
\partial f}{\partial s}\left( a,sc+\left( 1-s\right) d\right) ds \\
&&-\frac{1}{3}\int_{0}^{\frac{1}{2}}\left( s-\frac{1}{6}\right) \frac{%
\partial f}{\partial s}\left( \frac{a+b}{2},sc+\left( 1-s\right) d\right) ds
\\
&&-\frac{1}{3}\int_{\frac{1}{2}}^{1}\left( s-\frac{5}{6}\right) \frac{%
\partial f}{\partial s}\left( \frac{a+b}{2},sc+\left( 1-s\right) d\right) ds
\\
&&+\int_{0}^{\frac{1}{2}}\int_{\frac{1}{2}}^{1}\left( s-\frac{1}{6}\right) 
\frac{\partial f}{\partial s}\left( ta+\left( 1-t\right) b,sc+\left(
1-s\right) d\right) dsdt \\
&&\left. +\int_{\frac{1}{2}}^{1}\int_{\frac{1}{2}}^{1}\left( s-\frac{5}{6}%
\right) \frac{\partial f}{\partial s}\left( ta+\left( 1-t\right) b,sc+\left(
1-s\right) d\right) dsdt\right\} .
\end{eqnarray*}%
Computing these integrals and using the change of the variable $x=ta+\left(
1-t\right) b$ and $y=sc+\left( 1-s\right) d$ for $\left( t,s\right) \in %
\left[ 0,1\right] ^{2},$ then multiplying both sides with $\left( b-a\right)
\left( d-c\right) ,$ we get the desired result.
\end{proof}

\begin{theorem}
Let $f:\Delta \subset 
\mathbb{R}
^{2}\rightarrow 
\mathbb{R}
$ be a partial differentiable mapping on $\Delta =\left[ a,b\right] \times %
\left[ c,d\right] .$ If $\frac{\partial ^{2}f}{\partial t\partial s}$ is a
convex function on the co-ordinates on $\Delta ,$ then the following
inequality holds:%
\begin{eqnarray*}
&&\left\vert \frac{f\left( a,\frac{c+d}{2}\right) +f\left( b,\frac{c+d}{2}%
\right) +4f\left( \frac{a+b}{2},\frac{c+d}{2}\right) +f\left( \frac{a+b}{2}%
,c\right) +f\left( \frac{a+b}{2},d\right) }{9}\right. \\
&&+\frac{f\left( a,c\right) +f\left( b,c\right) +f\left( a,d\right) +f\left(
b,d\right) }{36}\left. +\frac{1}{\left( b-a\right) \left( d-c\right) }%
\int_{a}^{b}\int_{c}^{d}f\left( x,y\right) dydx-A\right\vert \\
&\leq &\frac{25\left( b-a\right) \left( d-c\right) }{72} \\
&&\times \frac{\left\vert \frac{\partial ^{2}f}{\partial t\partial s}\left(
a,c\right) \right\vert +\left\vert \frac{\partial ^{2}f}{\partial t\partial s%
}\left( a,d\right) \right\vert +\left\vert \frac{\partial ^{2}f}{\partial
t\partial s}\left( b,c\right) \right\vert +\left\vert \frac{\partial ^{2}f}{%
\partial t\partial s}\left( b,d\right) \right\vert }{72}
\end{eqnarray*}%
where%
\begin{eqnarray*}
A &=&\frac{1}{6\left( b-a\right) }\int_{a}^{b}\left[ f\left( x,c\right)
+4f\left( x,\frac{c+d}{2}\right) +f\left( x,d\right) \right] dx \\
&&+\frac{1}{6\left( d-c\right) }\int_{c}^{d}\left[ f\left( a,y\right)
+4f\left( \frac{a+b}{2},y\right) +f\left( b,y\right) \right] dy.
\end{eqnarray*}
\end{theorem}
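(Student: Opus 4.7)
The plan is to apply the identity in Lemma 1, take absolute values, and then use the co-ordinate convexity of $\left|\frac{\partial^{2}f}{\partial t\partial s}\right|$ to reduce the problem to computing two one-dimensional integrals involving $p$ and $q$. Writing the LHS of the theorem as the double integral supplied by the lemma and passing the absolute value inside gives
\begin{equation*}
\mathrm{LHS} \leq (b-a)(d-c) \int_{0}^{1}\int_{0}^{1}|p(x,t)|\,|q(y,s)|\left|\tfrac{\partial^{2}f}{\partial t\partial s}\bigl(ta+(1-t)b,\,sc+(1-s)d\bigr)\right| dt\,ds.
\end{equation*}

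Applying one-variable convexity in each slot to $g:=\left|\frac{\partial^{2}f}{\partial t\partial s}\right|$ produces the bilinear bound
\begin{equation*}
g\bigl(ta+(1-t)b,\,sc+(1-s)d\bigr) \leq ts\,g(a,c)+t(1-s)g(a,d)+(1-t)s\,g(b,c)+(1-t)(1-s)g(b,d).
\end{equation*}
Because the integrand now factorises, the corner value $g(a,c)$ appears with weight $\left(\int_{0}^{1}t|p(t)|dt\right)\left(\int_{0}^{1}s|q(s)|ds\right)$, and the three other corners appear with the mirror weights obtained by replacing $t$ by $1-t$ and/or $s$ by $1-s$.

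The decisive structural observation is that $p$ is antisymmetric about $1/2$, that is, $p(1-t)=-p(t)$, and likewise for $q$; hence $|p|$ and $|q|$ are symmetric about $1/2$. This forces $\int_{0}^{1}t|p(t)|dt=\int_{0}^{1}(1-t)|p(t)|dt=\tfrac{1}{2}\int_{0}^{1}|p(t)|dt$, and analogously for $q$. Consequently all four corner coefficients collapse to a single common value, which is the square of $\tfrac{1}{2}\int_{0}^{1}|p(t)|dt$.

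The last step is the explicit evaluation of $\int_{0}^{1}|p(t)|dt$. Splitting at the sign changes $t=1/6$ and $t=5/6$ and carrying out the four polynomial integrations yields $\int_{0}^{1}|p(t)|dt=\tfrac{5}{36}$, so the common coefficient is $\bigl(\tfrac{5}{72}\bigr)^{2}=\tfrac{25}{72\cdot 72}$; summing over the four corners produces exactly the stated bound. The only technical hurdle is the piecewise integration of $|p|$ (and the parallel one for $|q|$); the antisymmetry shortcut is what prevents the proof from degenerating into a long case-by-case computation of four separately-weighted integrals.
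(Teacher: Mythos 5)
Your proof is correct and follows essentially the same route as the paper: apply Lemma 1, pass the absolute value inside, use co-ordinate convexity of $\left\vert \frac{\partial ^{2}f}{\partial t\partial s}\right\vert$ to reduce to weighted integrals of $\left\vert p\right\vert$ and $\left\vert q\right\vert$, and evaluate each to $\frac{5}{72}$. The only difference is cosmetic: you apply convexity in both variables at once and exploit the symmetry $\left\vert p(1-t)\right\vert =\left\vert p(t)\right\vert$ to avoid the term-by-term piecewise integrations the paper writes out, arriving at the same coefficient $\left( \frac{5}{72}\right) ^{2}=\frac{25}{5184}$.
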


\begin{proof}
By using Lemma 1, we can write 
\begin{eqnarray*}
&&\left\vert \frac{f\left( a,\frac{c+d}{2}\right) +f\left( b,\frac{c+d}{2}%
\right) +4f\left( \frac{a+b}{2},\frac{c+d}{2}\right) +f\left( \frac{a+b}{2}%
,c\right) +f\left( \frac{a+b}{2},d\right) }{9}\right. \\
&&\left. +\frac{f\left( a,c\right) +f\left( b,c\right) +f\left( a,d\right)
+f\left( b,d\right) }{36}+\frac{1}{\left( b-a\right) \left( d-c\right) }%
\int_{a}^{b}\int_{c}^{d}f\left( x,y\right) dydx-A\right\vert \\
&\leq &\left( b-a\right) \left( d-c\right)
\int_{0}^{1}\int_{0}^{1}\left\vert p\left( x,t\right) q\left( y,s\right)
\right\vert \left\vert \frac{\partial ^{2}f}{\partial t\partial s}\left(
ta+\left( 1-t\right) b,sc+\left( 1-s\right) d\right) \right\vert dtds.
\end{eqnarray*}%
Since $f:\Delta \rightarrow 
\mathbb{R}
$ is co-ordinated convex on $\Delta ,$ we get%
\begin{eqnarray*}
&&\left\vert \frac{f\left( a,\frac{c+d}{2}\right) +f\left( b,\frac{c+d}{2}%
\right) +4f\left( \frac{a+b}{2},\frac{c+d}{2}\right) +f\left( \frac{a+b}{2}%
,c\right) +f\left( \frac{a+b}{2},d\right) }{9}\right. \\
&&\left. +\frac{f\left( a,c\right) +f\left( b,c\right) +f\left( a,d\right)
+f\left( b,d\right) }{36}+\frac{1}{\left( b-a\right) \left( d-c\right) }%
\int_{a}^{b}\int_{c}^{d}f\left( x,y\right) dydx-A\right\vert \\
&\leq &\left( b-a\right) \left( d-c\right) \int_{0}^{1}\left\vert q\left(
y,s\right) \right\vert \left[ \int_{0}^{1}\left\vert p\left( x,t\right)
\right\vert \left\{ t\left\vert \frac{\partial ^{2}f}{\partial t\partial s}%
\left( a,sc+\left( 1-s\right) d\right) \right\vert \right. \right. \\
&&\left. +\left. \left( 1-t\right) \left\vert \frac{\partial ^{2}f}{\partial
t\partial s}\left( b,sc+\left( 1-s\right) d\right) \right\vert dt\right\} %
\right] ds.
\end{eqnarray*}%
Computing the integral in the right hand side of above inequality, we have%
\begin{eqnarray*}
&&\int_{0}^{1}\left\vert p\left( x,t\right) \right\vert \left\{ t\left\vert 
\frac{\partial ^{2}f}{\partial t\partial s}\left( a,sc+\left( 1-s\right)
d\right) \right\vert +\left( 1-t\right) \left\vert \frac{\partial ^{2}f}{%
\partial t\partial s}\left( b,sc+\left( 1-s\right) d\right) \right\vert
\right\} dt \\
&=&\int_{0}^{\frac{1}{6}}\left( \frac{1}{6}-t\right) \left\{ t\left\vert 
\frac{\partial ^{2}f}{\partial t\partial s}\left( a,sc+\left( 1-s\right)
d\right) \right\vert +\left( 1-t\right) \left\vert \frac{\partial ^{2}f}{%
\partial t\partial s}\left( b,sc+\left( 1-s\right) d\right) \right\vert
\right\} dt \\
&&+\int_{\frac{1}{6}}^{\frac{1}{2}}\left( t-\frac{1}{6}\right) \left\{
t\left\vert \frac{\partial ^{2}f}{\partial t\partial s}\left( a,sc+\left(
1-s\right) d\right) \right\vert +\left( 1-t\right) \left\vert \frac{\partial
^{2}f}{\partial t\partial s}\left( b,sc+\left( 1-s\right) d\right)
\right\vert \right\} dt \\
&&+\int_{\frac{1}{2}}^{\frac{5}{6}}\left( \frac{5}{6}-t\right) \left\{
t\left\vert \frac{\partial ^{2}f}{\partial t\partial s}\left( a,sc+\left(
1-s\right) d\right) \right\vert +\left( 1-t\right) \left\vert \frac{\partial
^{2}f}{\partial t\partial s}\left( b,sc+\left( 1-s\right) d\right)
\right\vert \right\} dt \\
&&+\int_{\frac{5}{6}}^{1}\left( t-\frac{5}{6}\right) \left\{ t\left\vert 
\frac{\partial ^{2}f}{\partial t\partial s}\left( a,sc+\left( 1-s\right)
d\right) \right\vert +\left( 1-t\right) \left\vert \frac{\partial ^{2}f}{%
\partial t\partial s}\left( b,sc+\left( 1-s\right) d\right) \right\vert
\right\} dt
\end{eqnarray*}%
\begin{equation*}
=\frac{5}{72}\left( \left\vert \frac{\partial ^{2}f}{\partial t\partial s}%
\left( a,sc+\left( 1-s\right) d\right) \right\vert +\left\vert \frac{%
\partial ^{2}f}{\partial t\partial s}\left( b,sc+\left( 1-s\right) d\right)
\right\vert \right) .
\end{equation*}%
We obtain%
\begin{eqnarray}
&&\left\vert \frac{f\left( a,\frac{c+d}{2}\right) +f\left( b,\frac{c+d}{2}%
\right) +4f\left( \frac{a+b}{2},\frac{c+d}{2}\right) +f\left( \frac{a+b}{2}%
,c\right) +f\left( \frac{a+b}{2},d\right) }{9}\right.  \label{2.3} \\
&&+\frac{f\left( a,c\right) +f\left( b,c\right) +f\left( a,d\right) +f\left(
b,d\right) }{36}  \notag \\
&&\left. +\frac{1}{\left( b-a\right) \left( d-c\right) }\int_{a}^{b}%
\int_{c}^{d}f\left( x,y\right) dydx-A\right\vert  \notag \\
&\leq &\frac{5\left( b-a\right) \left( d-c\right) }{72}  \notag \\
&&\times \int_{0}^{1}\left\vert q\left( y,s\right) \right\vert \left\{
\left\vert \frac{\partial ^{2}f}{\partial t\partial s}\left( a,sc+\left(
1-s\right) d\right) \right\vert +\left\vert \frac{\partial ^{2}f}{\partial
t\partial s}\left( b,sc+\left( 1-s\right) d\right) \right\vert \right\} ds 
\notag
\end{eqnarray}%
By a similar argument for the above integral, we have%
\begin{eqnarray}
&&\int_{0}^{1}\left\vert q\left( y,s\right) \right\vert \left\{ \left\vert 
\frac{\partial ^{2}f}{\partial t\partial s}\left( a,sc+\left( 1-s\right)
d\right) \right\vert +\left\vert \frac{\partial ^{2}f}{\partial t\partial s}%
\left( b,sc+\left( 1-s\right) d\right) \right\vert \right\} ds  \notag \\
&=&\int_{0}^{\frac{1}{6}}\left( \frac{1}{6}-s\right) \left\{ s\left\vert 
\frac{\partial ^{2}f}{\partial t\partial s}\left( a,c\right) \right\vert
+\left( 1-s\right) \left\vert \frac{\partial ^{2}f}{\partial t\partial s}%
\left( a,d\right) \right\vert \right\} ds  \label{2.4} \\
&&+\int_{0}^{\frac{1}{6}}\left( \frac{1}{6}-s\right) \left\{ s\left\vert 
\frac{\partial ^{2}f}{\partial t\partial s}\left( b,c\right) \right\vert
+\left( 1-s\right) \left\vert \frac{\partial ^{2}f}{\partial t\partial s}%
\left( b,d\right) \right\vert \right\} ds  \notag \\
&&+\int_{\frac{1}{6}}^{\frac{1}{2}}\left( s-\frac{1}{6}\right) \left\{
s\left\vert \frac{\partial ^{2}f}{\partial t\partial s}\left( a,c\right)
\right\vert +\left( 1-s\right) \left\vert \frac{\partial ^{2}f}{\partial
t\partial s}\left( a,d\right) \right\vert \right\} ds  \notag \\
&&+\int_{\frac{1}{6}}^{\frac{1}{2}}\left( s-\frac{1}{6}\right) \left\{
s\left\vert \frac{\partial ^{2}f}{\partial t\partial s}\left( b,c\right)
\right\vert +\left( 1-s\right) \left\vert \frac{\partial ^{2}f}{\partial
t\partial s}\left( b,d\right) \right\vert \right\} ds  \notag \\
&&+\int_{\frac{1}{2}}^{\frac{5}{6}}\left( \frac{5}{6}-s\right) \left\{
s\left\vert \frac{\partial ^{2}f}{\partial t\partial s}\left( a,c\right)
\right\vert +\left( 1-s\right) \left\vert \frac{\partial ^{2}f}{\partial
t\partial s}\left( a,d\right) \right\vert \right\} ds  \notag \\
&&+\int_{\frac{1}{2}}^{\frac{5}{6}}\left( \frac{5}{6}-s\right) \left\{
s\left\vert \frac{\partial ^{2}f}{\partial t\partial s}\left( b,c\right)
\right\vert +\left( 1-s\right) \left\vert \frac{\partial ^{2}f}{\partial
t\partial s}\left( b,d\right) \right\vert \right\} ds  \notag \\
&&+\int_{\frac{5}{6}}^{1}\left( s-\frac{5}{6}\right) \left\{ s\left\vert 
\frac{\partial ^{2}f}{\partial t\partial s}\left( a,c\right) \right\vert
+\left( 1-s\right) \left\vert \frac{\partial ^{2}f}{\partial t\partial s}%
\left( a,d\right) \right\vert \right\} ds  \notag \\
&&+\int_{\frac{5}{6}}^{1}\left( s-\frac{5}{6}\right) \left\{ s\left\vert 
\frac{\partial ^{2}f}{\partial t\partial s}\left( b,c\right) \right\vert
+\left( 1-s\right) \left\vert \frac{\partial ^{2}f}{\partial t\partial s}%
\left( b,d\right) \right\vert \right\} ds  \notag \\
&=&\frac{5\left[ \left\vert \frac{\partial ^{2}f}{\partial t\partial s}%
\left( a,c\right) \right\vert +\left\vert \frac{\partial ^{2}f}{\partial
t\partial s}\left( a,d\right) \right\vert +\left\vert \frac{\partial ^{2}f}{%
\partial t\partial s}\left( b,c\right) \right\vert +\left\vert \frac{%
\partial ^{2}f}{\partial t\partial s}\left( b,d\right) \right\vert \right] }{%
72}  \notag
\end{eqnarray}%
If we use (\ref{2.4}) in (\ref{2.3}), we get the required result.
\end{proof}

\begin{theorem}
Let $f:\Delta \subset 
\mathbb{R}
^{2}\rightarrow 
\mathbb{R}
$ be a partial differentiable mapping on $\Delta =\left[ a,b\right] \times %
\left[ c,d\right] .$ If $\frac{\partial ^{2}f}{\partial t\partial s}$ is
bounded, i.e.,%
\begin{equation*}
\left\Vert \frac{\partial ^{2}f}{\partial t\partial s}\left(
ta+(1-t)b,sc+(1-s)d\right) \right\Vert _{\infty }=\underset{(x,y)\in \left(
a,b\right) \times \left( c,d\right) }{\sup }\left\vert \frac{\partial ^{2}f}{%
\partial t\partial s}\left( ta+(1-t)b,sc+(1-s)d\right) \right\vert <\infty
\end{equation*}%
for all $\left( t,s\right) \in \left[ 0,1\right] ^{2}.$ Then the following
inequality holds:%
\begin{eqnarray*}
&&\left\vert \frac{f\left( a,\frac{c+d}{2}\right) +f\left( b,\frac{c+d}{2}%
\right) +4f\left( \frac{a+b}{2},\frac{c+d}{2}\right) +f\left( \frac{a+b}{2}%
,c\right) +f\left( \frac{a+b}{2},d\right) }{9}\right. \\
&&+\frac{f\left( a,c\right) +f\left( b,c\right) +f\left( a,d\right) +f\left(
b,d\right) }{36} \\
&&\left. +\frac{1}{\left( b-a\right) \left( d-c\right) }\int_{a}^{b}%
\int_{c}^{d}f\left( x,y\right) dydx-A\right\vert \\
&\leq &\frac{25\left( b-a\right) \left( d-c\right) }{1296}\left\Vert \frac{%
\partial ^{2}f}{\partial t\partial s}\left( ta+(1-t)b,sc+(1-s)d\right)
\right\Vert _{\infty }.
\end{eqnarray*}
\end{theorem}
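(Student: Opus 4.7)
The plan is to apply Lemma~1 directly: I take absolute values on both sides of the identity (\ref{2.1}) and use the boundedness hypothesis to pull $\left\Vert \frac{\partial^2 f}{\partial t\partial s}\right\Vert_\infty$ outside the double integral on the right. In contrast with the previous theorem, no appeal to co-ordinated convexity is needed here; the $L^\infty$ bound reduces the right-hand side to
\[
(b-a)(d-c)\left\Vert \frac{\partial^2 f}{\partial t \partial s}\right\Vert_\infty \int_0^1\!\!\int_0^1 |p(x,t)|\,|q(y,s)|\, dt\, ds,
\]
so the whole problem collapses to evaluating this double integral of $|p|\,|q|$ over the unit square.

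Since $p$ depends only on $t$ and $q$ only on $s$, the double integral factors as the product $\bigl(\int_0^1 |p(x,t)|\,dt\bigr)\bigl(\int_0^1 |q(y,s)|\,ds\bigr)$. Next I would evaluate each factor. The kernel $t-\tfrac{1}{6}$ changes sign at $t=\tfrac{1}{6}\in[0,\tfrac{1}{2}]$, and the kernel $t-\tfrac{5}{6}$ changes sign at $t=\tfrac{5}{6}\in(\tfrac{1}{2},1]$, so $\int_0^1 |p(x,t)|\,dt$ splits into four elementary integrals of linear functions; by the obvious symmetry between $p$ and $q$, the $s$-integral has the same four-piece structure. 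A direct computation gives each single-variable integral the value $\tfrac{5}{36}$, so their product equals $\tfrac{25}{1296}$; multiplied by $(b-a)(d-c)\left\Vert \partial^2 f/\partial t\partial s\right\Vert_\infty$ this yields exactly the stated bound.

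The main obstacle is really only the bookkeeping in the four-piece split at the sign-change points $\tfrac{1}{6}, \tfrac{1}{2}, \tfrac{5}{6}$; each piece is a trivial antiderivative. The argument is notably shorter than the proof of the preceding theorem because the boundedness assumption bypasses the term-by-term application of co-ordinated convexity at the four vertices $(a,c)$, $(a,d)$, $(b,c)$, $(b,d)$, replacing a convexity-based vertex expansion with a single sup-norm estimate.
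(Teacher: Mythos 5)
Your proposal is correct and follows essentially the same route as the paper: apply Lemma 1, take absolute values, pull out the sup norm, and evaluate $\int_{0}^{1}\int_{0}^{1}\left\vert p\left( x,t\right) q\left( y,s\right) \right\vert dtds=\frac{25}{1296}$. Your factorization into $\left( \int_{0}^{1}\left\vert p\right\vert dt\right) \left( \int_{0}^{1}\left\vert q\right\vert ds\right) =\left( \frac{5}{36}\right) ^{2}$ simply makes explicit the ``simple calculation'' the paper leaves to the reader, and the value checks out.
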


\begin{proof}
From Lemma 1 and using the property of modulus, we have%
\begin{eqnarray*}
&&\left\vert \frac{f\left( a,\frac{c+d}{2}\right) +f\left( b,\frac{c+d}{2}%
\right) +4f\left( \frac{a+b}{2},\frac{c+d}{2}\right) +f\left( \frac{a+b}{2}%
,c\right) +f\left( \frac{a+b}{2},d\right) }{9}\right. \\
&&\left. +\frac{f\left( a,c\right) +f\left( b,c\right) +f\left( a,d\right)
+f\left( b,d\right) }{36}+\frac{1}{\left( b-a\right) \left( d-c\right) }%
\int_{a}^{b}\int_{c}^{d}f\left( x,y\right) dydx-A\right\vert \\
&\leq &\left( b-a\right) \left( d-c\right)
\int_{0}^{1}\int_{0}^{1}\left\vert p\left( x,t\right) q\left( y,s\right)
\right\vert \left\vert \frac{\partial ^{2}f}{\partial t\partial s}\left(
ta+\left( 1-t\right) b,sc+\left( 1-s\right) d\right) \right\vert dtds.
\end{eqnarray*}%
Since $\frac{\partial ^{2}f}{\partial t\partial s}$ is bounded, we have 
\begin{eqnarray}
&&\left\vert \frac{f\left( a,\frac{c+d}{2}\right) +f\left( b,\frac{c+d}{2}%
\right) +4f\left( \frac{a+b}{2},\frac{c+d}{2}\right) +f\left( \frac{a+b}{2}%
,c\right) +f\left( \frac{a+b}{2},d\right) }{9}\right.  \notag \\
&&+\frac{f\left( a,c\right) +f\left( b,c\right) +f\left( a,d\right) +f\left(
b,d\right) }{36}  \label{2.5} \\
&&\left. +\frac{1}{\left( b-a\right) \left( d-c\right) }\int_{a}^{b}%
\int_{c}^{d}f\left( x,y\right) dydx-A\right\vert  \notag \\
&\leq &\left( b-a\right) \left( d-c\right)  \notag \\
&&\times \left\Vert \frac{\partial ^{2}f}{\partial t\partial s}\left(
ta+\left( 1-t\right) b,sc+\left( 1-s\right) d\right) \right\Vert _{\infty
}\int_{0}^{1}\int_{0}^{1}\left\vert p\left( x,t\right) q\left( y,s\right)
\right\vert dtds.  \notag
\end{eqnarray}%
By a simple calculation,%
\begin{equation}
\int_{0}^{1}\int_{0}^{1}\left\vert p\left( x,t\right) q\left( y,s\right)
\right\vert dtds=\frac{25}{1296}.  \label{2.6}
\end{equation}%
If we use (\ref{2.6}) in (\ref{2.5}), we have 
\begin{eqnarray*}
&&\left\vert \frac{f\left( a,\frac{c+d}{2}\right) +f\left( b,\frac{c+d}{2}%
\right) +4f\left( \frac{a+b}{2},\frac{c+d}{2}\right) +f\left( \frac{a+b}{2}%
,c\right) +f\left( \frac{a+b}{2},d\right) }{9}\right. \\
&&\left. +\frac{f\left( a,c\right) +f\left( b,c\right) +f\left( a,d\right)
+f\left( b,d\right) }{36}+\frac{1}{\left( b-a\right) \left( d-c\right) }%
\int_{a}^{b}\int_{c}^{d}f\left( x,y\right) dydx-A\right\vert \\
&\leq &\frac{25\left( b-a\right) \left( d-c\right) }{1296}\left\Vert \frac{%
\partial ^{2}f}{\partial t\partial s}\left( ta+\left( 1-t\right) b,sc+\left(
1-s\right) d\right) \right\Vert _{\infty }.
\end{eqnarray*}%
This completes the proof.
\end{proof}

\end{document}